\newtheorem{thm}{Theorem}[section]
\newtheorem{lem}[thm]{Lemma}
\newtheorem{prop}[thm]{Proposition}
\theoremstyle{definition}
\theoremstyle{remark}
\newtheorem{rem}[thm]{\bf Remark}
\numberwithin{equation}{section}
\begin{document}

\title[An explicit projective bimodule resolution of a Leavitt path algebra]{An explicit projective bimodule resolution of a Leavitt path algebra}

\author{Xiao-Wu Chen, Huanhuan Li, Zhengfang Wang}

\subjclass[2010]{16E05, 13D03, 16G20, 16E60}
\thanks{}
\keywords{projective bimodule resolution, Leavitt path algebra, quasi-free algebra, derivation, Hochschild cohomology}

\date{\today}

\begin{abstract}
We construct an explicit projective bimodule resolution for the Leavitt path algebra of a row-finite quiver. We prove that the Leavitt path algebra of a row-countable quiver has Hochschild cohomolgical dimension at most one, that is, it is quasi-free in the sense of Cuntz-Quillen. The construction of the resolution relies on an explicit derivation of the Leavitt path algebra.
\end{abstract}

\maketitle

\section{Introduction}

Let $k$ be a field and $Q$ be an arbitrary quiver. The notion of a Leavitt path algebra $L_k(Q)$ of $Q$ with coefficients in $k$ is introduced in \cite{AGGP, AMP, AA, AA08}, which are algebraic analogues
of the Cuntz-Krieger $C^*$-algebras. Recently, Leavitt path algebras attract a lot of attention. The study of Leavitt path algebras is related to noncommutative algebraic geometry \cite{Smi}, singularity categories \cite{CY, Li}, Steinberg algebras \cite{CFST} and symbolic dynamic systems \cite{ALPS, Chen, Haz}.

This paper concerns the Hochschild cohomological properties of Leavitt path algebras. Recall that a quiver $Q$ is row-finite if it does not contain an infinite emitter.  We construct an explicit projective bimodule resolution for the Leavitt path algebra of a row-finite quiver; see Theorem \ref{thm:1}.

The constructed resolution and the desingularization in \cite{AA08} imply the following main consequence. Recall that a quiver $Q$ is row-countable \cite{AR} provided that there are at most countable arrows starting at any fixed vertex.

\vskip 5pt

\noindent {\bf Theorem.}   \emph{Let $k$ be a field and $Q$ be a row-countable quiver. Then the Leavitt path algebra $L_k(Q)$ has  Hochschild cohomological dimension at most one.}

\vskip 5pt

 Recall that an algebra with Hochschild cohomological dimension at most one is called quasi-free in \cite{CQ}; such algebras are viewed as analogues of smooth manifolds. For example, path algebras are well known to be quasi-free. The above theorem implies that Leavitt path algebras of row-countable quivers are quasi-free. In particular, we recover the fact that $L_k(Q)$ is hereditary; see \cite{AMP}.

The construction of the projective bimodule resolution boils down to an explicit construction of a derivation for a Leavitt path algebra. Using this derivation, we obtain a direct summand of the relative bar resolution,  which is the promised projective bimodule resolution of the Leavitt path algebra.

It is well known that Hochschild cohomological properties are related to the center and the derivations. Indeed, the center and the derivations of a Leavitt path algebra are studied in \cite{AC, CMMS} and \cite{Lop}. The resolution obtained here might shed new light on these studies.

The paper is structured as follows. In Section \ref{section2}, we recall basic facts on derivations. In Section \ref{section3}, we construct a derivation of the Leavitt path algebra. In the final section, we construct the promised projective bimodule resolution.

Throughout this paper, we fix a field $k$.  By a $k$-algebra, we mean an associative $k$-algebra, which possibly does not contain a unit. We require that modules are unital.

\section{Derivations and differential forms}
\label{section2}
In this section, we recall basic facts on derivations. The references for universal derivations are \cite[Section 2.2]{Cohn} and \cite{CQ}.

Let $A$ be a $k$-algebra with enough idempotents. In other words, the algebra $A$ contains a set $\{e_i\; |\; i\in I\}$ of \emph{local units}, where the elements $e_i$'s are pairwise orthogonal nonzero idempotents with the following property: for each element $a\in A$, there exists a finite subset $J\subset I$ satisfying $(\sum_{j\in J}e_j)a=a=a(\sum_{j\in J}e_j)$. We set $S=\bigoplus_{i\in I}ke_i$, which is a subalgebra of $A$.

Let $M$ be an $A$-bimodule. Recall that a $k$-derivation $d\colon A\rightarrow M$ is a $k$-linear map satisfying the Leibniz rule
$$d(ab)=d(a)b+ad(b)$$
for each $a, b \in A$. The $k$-derivation $d$ is called an \emph{$S$-derivation} if $d(s)=0$ for all $s\in S$, or equivalently, $d$ is a morphism of $S$-bimodules. We denote by ${\rm Der}_S(A, M)$ the space of $S$-derivations.

Consider the multiplication map $m\colon A\otimes_S A\rightarrow A$, whose kernel is denoted by $\Omega^1_S(A)$. The $A$-bimodule $\Omega^1_S(A)$ is called the \emph{bimodule of relative differential $1$-forms}. An $S$-derivation $d\colon A\rightarrow \Omega$ is \emph{universal} provided that for each $A$-bimodule $M$ the following map is an isomorphism
\begin{align}\label{equ:uni-der}
{\rm Hom}_{A\mbox{-}A}(\Omega, M)\longrightarrow  {\rm Der}_S(A, M), \quad f\mapsto f\circ d.
\end{align}
Here, by ${\rm Hom}_{A\mbox{-}A}$ we mean the Hom space between $A$-bimodules.

Let us explain the construction of universal $S$-derivations. For this, we recall the \emph{relative bar resolution} of $A$
\begin{align}\label{equ:bar}
\cdots \longrightarrow  A\otimes_S A\otimes_S A\otimes_S A\stackrel{d_2}\longrightarrow A\otimes_S A\otimes_S A \stackrel{d_1}\longrightarrow A\otimes_S A\stackrel{m}\longrightarrow A\longrightarrow 0,
\end{align}
which is a projective bimodule resolution of $A$. Here, the differentials $d_n\colon A^{\otimes_S (n+2)}\rightarrow A^{\otimes_S (n+1)}$ are given by
$$d_n(a_0\otimes a_1\otimes \cdots \otimes a_{n+1})=\sum_{i=0}^{n} (-1)^i a_0\otimes \cdots \otimes a_{i}a_{i+1}\otimes \cdots \otimes a_{n+1}.$$
 Then we have the following isomorphisms.
\begin{align}\label{equ:hom-der}
{\rm Der}_S(A, M) &\simeq \{\theta\in {\rm Hom}_{A\mbox{-}A}(A\otimes_S A\otimes_S A, M)\; |\; \theta\circ d_2=0\}\\
                     &\simeq {\rm Hom}_{A\mbox{-}A}(\Omega_S^1(A), M) \nonumber
\end{align}
Here, the first isomorphism sends $d$ to $\theta$ such that $\theta(a_0\otimes a_1\otimes a_2)=a_0d(a_1)a_2$; the second isomorphism uses the fact that the cokernel of $d_2$ is isomorphic to $\Omega_S^1(A)$. Consequently, the $S$-derivation $\Delta\colon A\rightarrow \Omega_S^1(A)$ such that
\begin{align}\label{equ:uni}
\Delta(a)=a\otimes e_i-e_j\otimes  a
\end{align}
 for each $a\in e_jAe_i$ and $i, j\in I$, is a universal $S$-derivation; compare \cite[Chapter 2, Proposition 2.5]{Cohn} and \cite[Propositions 2.4 and 2.5]{CQ}.

Indeed,  the above discussion yields the following observation.

\begin{lem}\label{lem:uni}
Let $d\colon A\rightarrow \Omega$ be an $S$-derivation. Then $d$ is universal if and only if there is an isomorphism $\theta\colon \Omega\rightarrow \Omega^1_S(A)$ of $A$-bimodules such that $(\theta \circ d)(a)=a\otimes e_i-e_j\otimes a$ for each $a\in e_jAe_i$ and $i, j\in I$. \hfill $\square$
\end{lem}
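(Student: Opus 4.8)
The plan is to exploit the fact, recalled just above the statement, that the canonical $S$-derivation $\Delta\colon A\to\Omega^1_S(A)$ of \eqref{equ:uni} is \emph{itself} universal, and then to run the standard uniqueness-of-universal-objects argument. Both directions reduce to comparing the two objects $(\Omega, d)$ and $(\Omega^1_S(A),\Delta)$ as representatives of the functor ${\rm Der}_S(A,-)$ on $A$-bimodules.

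For the ``if'' direction, suppose an $A$-bimodule isomorphism $\theta\colon\Omega\to\Omega^1_S(A)$ with $\theta\circ d=\Delta$ is given, so that $d=\theta^{-1}\circ\Delta$. For any $A$-bimodule $M$, precomposition with $\theta^{-1}$ is an isomorphism ${\rm Hom}_{A\text{-}A}(\Omega, M)\to{\rm Hom}_{A\text{-}A}(\Omega^1_S(A), M)$, and I would check that composing it with the universal isomorphism $g\mapsto g\circ\Delta$ attached to $\Delta$ yields precisely the map $f\mapsto f\circ d$, since $(f\circ\theta^{-1})\circ\Delta=f\circ d$. Being a composite of two isomorphisms, $f\mapsto f\circ d$ is an isomorphism for every $M$, so $d$ is universal.

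For the ``only if'' direction, suppose $d$ is universal. Applying the universal property of $d$ to the bimodule $M=\Omega^1_S(A)$ and the $S$-derivation $\Delta$ produces a unique $A$-bimodule morphism $\theta\colon\Omega\to\Omega^1_S(A)$ with $\theta\circ d=\Delta$; symmetrically, applying the universal property of $\Delta$ to $M=\Omega$ and $d$ produces a unique $\psi\colon\Omega^1_S(A)\to\Omega$ with $\psi\circ\Delta=d$. I would then show that $\theta$ and $\psi$ are mutually inverse: the composite $\psi\circ\theta$ satisfies $(\psi\circ\theta)\circ d=\psi\circ\Delta=d=\mathrm{id}_\Omega\circ d$, and the injectivity built into the universal isomorphism for $d$ (with $M=\Omega$) forces $\psi\circ\theta=\mathrm{id}_\Omega$; the identity $\theta\circ\psi=\mathrm{id}_{\Omega^1_S(A)}$ follows in the same way from the universality of $\Delta$. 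Thus $\theta$ is the desired isomorphism, and $\theta\circ d=\Delta$ gives exactly the formula $(\theta\circ d)(a)=a\otimes e_i-e_j\otimes a$ for $a\in e_jAe_i$.

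The argument is entirely formal once $\Delta$ is known to be universal, so I expect no genuine obstacle; the only point requiring care is to invoke the \emph{uniqueness} half of the universal property, rather than merely existence, when verifying that $\theta$ and $\psi$ compose to the two identity maps.
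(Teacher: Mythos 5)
Your proposal is correct and is essentially the argument the paper intends: the paper leaves the lemma as an ``observation'' following from the universality of $\Delta$ established in the preceding discussion (via the isomorphisms (\ref{equ:hom-der})), and your proof is precisely the standard uniqueness-of-representing-objects argument that this observation tacitly invokes, with the uniqueness (injectivity) half of the universal property correctly identified as the key point in the ``only if'' direction.
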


The following well-known results will be used later.

\begin{lem}\label{lem:der}
Let $A$ be a $k$-algebra generated by a subset $X\subseteq A$, and  $M$ be an $A$-bimodule. Assume that $ d', d''\colon A\rightarrow M$ are $k$-derivations such that $d'(x)=d''(x)$ for each $x\in X$. Then $d'=d''$.
 \end{lem}

\begin{proof}
The result follows directly from the Leibniz rule and induction.
\end{proof}

\begin{lem}\label{lem:der2}
Let $A$ be a $k$-algebra, and $M$ be an $A$-bimodule with a $k$-derivation $d\colon A\rightarrow M$. Assume that $I\subseteq A$ is a two-sided ideal generated by a set $T$ of elements satisfying $IM=0=MI$. Suppose that $d(t)=0$ for each $t\in T$. Then $d(I)=0$. Consequently, $d$ induces a derivation $A/I\rightarrow M$ of the quotient algebra $A/I$.
\end{lem}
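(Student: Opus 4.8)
The plan is to prove $d(I)=0$ by checking that $d$ annihilates every generator of the ideal and then invoking $k$-linearity; the induced derivation on $A/I$ will then follow almost immediately. First I would observe that, since $A$ is not assumed to be unital, the two-sided ideal generated by $T$ is spanned over $k$ by elements of the four shapes $t$, $at$, $tb$, and $atb$, where $t\in T$ and $a,b\in A$. As $d$ is $k$-linear, it suffices to show that $d$ vanishes on each of these.

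The computation rests on the Leibniz rule together with the three hypotheses $d(t)=0$, $IM=0$, and $MI=0$. For the generic generator $atb$ I would write $d(atb)=d(a)(tb)+a\,d(tb)$ and then expand $d(tb)=d(t)b+t\,d(b)=t\,d(b)$, so that $d(atb)=d(a)(tb)+(at)\,d(b)$. The key point is that $tb\in I$ and $at\in I$, since $I$ is a two-sided ideal; hence the first term lies in $MI$ and the second in $IM$, both of which vanish by assumption, giving $d(atb)=0$. The remaining shapes are handled identically: $d(t)=0$ by hypothesis, $d(at)=d(a)t+a\,d(t)=d(a)t\in MI=0$, and $d(tb)=d(t)b+t\,d(b)=t\,d(b)\in IM=0$. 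Combining these with $k$-linearity yields $d(I)=0$.

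For the final assertion, the conditions $IM=0=MI$ are exactly what is needed to promote $M$ to a bimodule over the quotient $A/I$, with $\bar a\cdot m:=am$ and $m\cdot\bar a:=ma$ both well defined. Since $d$ kills $I$, it descends to a well-defined $k$-linear map $\bar d\colon A/I\to M$, and the Leibniz rule for $\bar d$ is inherited verbatim from that of $d$. The only step demanding any care — and the closest thing here to an obstacle — is the bookkeeping in the non-unital setting: one must enumerate all shapes of generators and, in each, correctly match the factor lying in $I$ against the appropriate annihilation condition, $MI$ on the right and $IM$ on the left.
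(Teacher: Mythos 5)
Your proof is correct and follows essentially the same route as the paper's: apply the Leibniz rule to products involving generators $t\in T$ and kill each resulting term using $d(t)=0$, $IM=0$, or $MI=0$, then descend to $A/I$. In fact you are slightly more thorough than the paper, whose proof only treats the shapes $at$ and $ta$ explicitly and implicitly leaves the case $atb$ (one further application of the Leibniz rule, using $d(at)=0$ and $at\in I$) to the reader.
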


\begin{proof}
We claim that $d(at)=0=d(ta)$ for any $t\in T$ and $a\in A$. By the Lebiniz rule, we have $d(at)=ad(t)+d(a)t$. By assumption, we have $d(t)=0$ and $d(a)t=0$. Then we are done with $d(at)=0$. Similarly, we have the other equality.
\end{proof}

\section{Derivations of path algebras and Leavitt path algebras}
\label{section3}

In this section, we construct a derivation of a Leavitt path algebra. For this, we recall a well-known result on the derivations of path algebras.

Recall that a \emph{quiver} $Q=(Q_{0}, Q_{1}; s, t)$ consists of a set $Q_{0}$ of vertices, a set $Q_{1}$ of arrows and two maps $s, t: Q_{1}\xrightarrow []{}Q_{0}$, which associate to each arrow $\alpha$ its starting vertex $s(\alpha)$ and its terminating vertex $t(\alpha)$, respectively. A vertex $i$ of $Q$ is \emph{regular} provided that the set $s^{-1}(i)$ is nonempty and finite, or equivalently, it is neither a sink nor an infinite emitter. We denote by $Q_0^{\rm re}$ the subset of $Q_0$ formed by regular vertices.

For a given quiver $Q$, a path of length $n$ is a sequence $p=\alpha_{n}\cdots\alpha_{2}\alpha_{1}$ of arrows with
$t(\alpha_{j})=s(\alpha_{j+1})$ for $1\leq j\leq n-1$. The starting vertex of $p$, denoted by $s(p)$, is $s(\alpha_{1})$. The terminating vertex of $p$, denoted by $t(p)$, is $t(\alpha_{n})$. We identify an
arrow with a path of length one. We associate to each vertex $i\in Q_{0}$ a trivial path $e_{i}$ of length zero. Set $s(e_i)=i=t(e_i)$. We denote by $Q_n$ the set of paths of length $n$.

Let $k$ be a field. The \emph{path algebra} $kQ=\bigoplus_{n\geq 0}kQ_{n}$ has a basis given by all paths in $Q$, whose multiplication is given as follows: for two paths $p$ and $q$ satisfying $s(p)=t(q)$, the product $pq$ is their concatenation;
otherwise, we set the product $pq$ to be zero. Here, we write the concatenation of paths from right to left. For example,  $e_{t(p)}p=p=pe_{s(p)}$ for each path $p$.

We observe that $\{e_i\; |\; i\in Q_0\}$ is a set of local units in $kQ$. Set $S=\bigoplus_{i\in Q_0}ke_i\subseteq kQ$.

The following result is well known. For more about $k$-derivations of $kQ$, we refer to \cite{GL}.

\begin{prop}\label{prop:path}
Let $Q$ be an arbitrary  quiver. Then there is a projective bimodule resolution of $kQ$
$$0\longrightarrow kQ\otimes_S kQ_1\otimes_S kQ\stackrel{\delta}\longrightarrow kQ\otimes_S kQ\stackrel{m}\longrightarrow kQ\longrightarrow 0,$$
where $m$ is the multiplication map and the bimodule homomorphism $\delta$ is uniquely determined by
$$\delta(e_{t(\alpha)}\otimes \alpha \otimes e_{s(\alpha)})=\alpha\otimes e_{s(\alpha)}-e_{t(\alpha)}\otimes \alpha, \quad \mbox{ for each }\alpha\in Q_1.$$

Consequently, for each $kQ$-bimodule $M$, there is an isomorphism
\begin{align}\label{equ:der-p}
{\rm Der}_S(kQ, M)\stackrel{\sim}\longrightarrow {\rm Hom}_{S\mbox{-}S}(kQ_1, M)
\end{align}
sending $d$ to $d|_{kQ_1}$,  its restriction to $kQ_1$.
\end{prop}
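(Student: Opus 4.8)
The plan is to verify directly that the displayed three-term sequence is a projective bimodule resolution, and then to read off the ${\rm Der}$ isomorphism from the universal property recorded in \eqref{equ:hom-der} and Lemma \ref{lem:uni}. First I would fix $k$-bases: $kQ\otimes_S kQ$ has basis $\{p\otimes q\mid s(p)=t(q)\}$ and $kQ\otimes_S kQ_1\otimes_S kQ$ has basis $\{p\otimes\alpha\otimes q\mid s(p)=t(\alpha),\ s(\alpha)=t(q)\}$, with $p,q$ paths and $\alpha\in Q_1$. Since $\delta$ is a bimodule map, its defining formula gives $\delta(p\otimes\alpha\otimes q)=p\alpha\otimes q-p\otimes\alpha q$ on each such generator, which is the formula I would use throughout.

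To see that the two nonzero terms are projective $kQ$-bimodules, I would invoke the adjunction ${\rm Hom}_{kQ\mbox{-}kQ}(kQ\otimes_S P\otimes_S kQ,-)\cong {\rm Hom}_{S\mbox{-}S}(P,-)$, valid for any $S$-bimodule $P$, together with the exactness of restriction along $S\hookrightarrow kQ$; hence $kQ\otimes_S P\otimes_S kQ$ is a projective $kQ$-bimodule whenever $P$ is a projective $S$-bimodule. Now $S=\bigoplus_i ke_i$ and $kQ_1=\bigoplus_{\alpha\in Q_1}k\alpha$ are projective $S$-bimodules, because $k\alpha\cong Se_{t(\alpha)}\otimes_k e_{s(\alpha)}S$ and each $Se_i\otimes_k e_jS$ is a direct summand of $S\otimes_k S$. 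Taking $P=S$ and $P=kQ_1$ yields the projectivity of $kQ\otimes_S kQ$ and of $kQ\otimes_S kQ_1\otimes_S kQ$.

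For exactness, surjectivity of $m$ is clear and $m\circ\delta=0$ follows by evaluating on generators. The heart of the argument is the identity ${\rm im}\,\delta=\ker m=\Omega^1_S(kQ)$. For a path $w=\alpha_n\cdots\alpha_1$, a telescoping computation shows that $\delta$ sends $\sum_{i=1}^{n}(\alpha_n\cdots\alpha_{i+1})\otimes\alpha_i\otimes(\alpha_{i-1}\cdots\alpha_1)$ to $w\otimes e_{s(w)}-e_{t(w)}\otimes w=\Delta(w)$, so ${\rm im}\,\delta$ contains the image of the universal $S$-derivation $\Delta$. Conversely, $\ker m$ is generated as a left $kQ$-module by $\{\Delta(a)\mid a\in kQ\}$: writing an element of $\ker m$ as $\sum_k a_k\otimes b_k$ with $a_k\in kQ\,e_{i_k}$, $b_k\in e_{i_k}kQ\,e_{l_k}$ and $\sum_k a_kb_k=0$, the relation $a_k\otimes b_k=a_kb_k\otimes e_{l_k}-a_k\Delta(b_k)$ expresses the element in terms of the $\Delta(b_k)$, the leading terms cancelling (after grouping by $e_{l_k}$) by the kernel condition. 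Since ${\rm im}\,\delta$ is a sub-bimodule containing every $\Delta(a)$, this forces $\ker m\subseteq{\rm im}\,\delta$, hence equality.

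Injectivity of $\delta$ I expect to be the main obstacle, and I would handle it using that $\delta$ preserves the grading by total length. In length $n$, the domain basis vectors correspond to pairs $(w,i)$ with $w$ a path of length $n$ and $1\le i\le n$, while the codomain basis vectors correspond to pairs $(w,j)$ with $0\le j\le n$, where $(w,j)$ denotes $u\otimes v$ for the factorization $w=uv$ with $\ell(v)=j$; the formula becomes $\delta(w,i)=(w,i-1)-(w,i)$. For each fixed $w$ this is an injective boundary-type map on the span of $(w,0),\dots,(w,n)$, and distinct paths $w$ give disjoint families of codomain basis vectors (since $u\otimes v$ recovers $w=uv$), so $\delta$ is injective. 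This completes the proof that the sequence is a projective bimodule resolution. Finally, \eqref{equ:hom-der} and Lemma \ref{lem:uni} give ${\rm Der}_S(kQ,M)\cong{\rm Hom}_{kQ\mbox{-}kQ}(\Omega^1_S(kQ),M)$, while $\delta$ identifies $\Omega^1_S(kQ)$ with $kQ\otimes_S kQ_1\otimes_S kQ$; composing with the adjunction from the projectivity step yields ${\rm Der}_S(kQ,M)\cong{\rm Hom}_{S\mbox{-}S}(kQ_1,M)$. Tracing $\Delta$ through these identifications, the corresponding universal derivation $D$ satisfies $D(\alpha)=e_{t(\alpha)}\otimes\alpha\otimes e_{s(\alpha)}$, so the composite isomorphism sends a derivation $d$ to its restriction $d|_{kQ_1}$, as claimed.
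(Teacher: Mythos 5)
Your argument is correct, and it differs from the paper's proof mainly in being self-contained: the paper's entire proof of the resolution is a citation of \cite[Chapter 2, Proposition 2.6]{Cohn}, after which it notes $\Omega^1_S(kQ)\simeq kQ\otimes_S kQ_1\otimes_S kQ$ and deduces (\ref{equ:der-p}) from the universal property (\ref{equ:uni-der}) --- this last step is exactly the closing paragraph of your proof. What you supply in addition is a proof of the cited fact itself, and all three ingredients are sound: projectivity via the adjunction ${\rm Hom}_{kQ\mbox{-}kQ}(kQ\otimes_S P\otimes_S kQ,-)\cong{\rm Hom}_{S\mbox{-}S}(P,-)$ for projective $S$-bimodules $P$; the equality ${\rm im}\,\delta=\ker m$, where the telescoping identity puts each $\Delta(w)$ in ${\rm im}\,\delta$ and the rewriting $a\otimes b=ab\otimes e_l-a\Delta(b)$ (with the leading terms cancelling summand-by-summand in $\bigoplus_l kQe_l$, as you note) shows $\ker m$ lies in the sub-bimodule ${\rm im}\,\delta$; and injectivity of $\delta$ from the length grading plus unique factorization of paths, which exhibits $\delta$ as a direct sum, over paths $w$, of maps $(w,i)\mapsto(w,i-1)-(w,i)$ that are visibly injective. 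Your route buys an explicit universal derivation $D(\alpha)=e_{t(\alpha)}\otimes\alpha\otimes e_{s(\alpha)}$ and an actual verification that the resulting isomorphism is restriction to $kQ_1$ (which the paper leaves implicit in the citation), at the cost of length; the paper's citation route is shorter and defers the combinatorics to \cite{Cohn}.
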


\begin{proof}
The projective bimodule resolution is well known; see \cite[Chapter 2, Proposition 2.6]{Cohn}. It follows that $\Omega^1_S(kQ)\simeq kQ\otimes_S kQ_1\otimes_S kQ$. Then the isomorphism (\ref{equ:der-p}) follows from (\ref{equ:uni-der}).
\end{proof}

We denote by $\overline{Q}$ the double quiver of $Q$, which is obtained by adding for each arrow $\alpha\in Q_1$ a new arrow $\alpha^*$ in the opposite direction, that is,  $s(\alpha^*)=t(\alpha)$ and $t(\alpha^*)=s(\alpha)$. The added arrows $\alpha^*$ are called \emph{ghost arrows}.

The \emph{Leavitt path algebra} $L_k(Q)$ is by definition the quotient algebra of $k\overline{Q}$ by the two-sided ideal generated by the following set $T$
\begin{align}\label{equ:t}
\big\{\alpha\beta^*-\delta_{\alpha, \beta}e_{t(\alpha)}\; |\; \alpha, \beta\in Q_1 \mbox{ with }s(\alpha)=s(\beta)\}\cup \{\sum_{\{\alpha\in Q_1\; |\; s(\alpha)=i\}}\alpha^*\alpha-e_i\; |\; i\in Q_0^{\rm re}\big\}.
\end{align}
The elements in $T$ are known as the \emph{Cuntz-Krieger relations}.

If $p=\alpha_{n}\cdots\alpha_2\alpha_{1}$ is a path in $Q$ of length $n\geq 1$, we define $p^{*}=\alpha_{1}^*\alpha_2^*\cdots\alpha_{n}^*$. We have $s(p^*)=t(p)$ and $t(p^*)=s(p)$. For convention, we set $e_{i}^*=e_{i}$ for $i\in Q_{0}$. We observe that for paths $p, q$ in $Q$ satisfying $t(p)\neq t(q)$, $p^*q=0$ in $L_k(Q)$. Recall that the Leavitt path algebra $L_k(Q)$ is spanned by the following set
$$\big\{e_i, p, p^*, \gamma^*\eta\; |\; i\in Q_0, p, \gamma, \text{~and~} \eta \text{~are ~nontrivial paths in ~} Q \text{~with~} t(\gamma)=t(\eta)\big\};$$
see \cite[Corollary 3.2]{Tom}. In general, this set is not $k$-linearly independent. For an explicit basis of $L_k(Q)$, we refer to \cite[Theorem 1]{AAJZ}.

We observe that  $L_k(Q)$ has a set  $\{e_i\; |\; i\in Q_0\}$ of local units. Set $S=\bigoplus_{i\in Q_0} ke_i\subseteq L_k(Q)$.

\begin{prop}\label{prop:der-L}
Let $Q$ be an arbitrary  quiver. Then there is a unique $S$-derivation $$D\colon L_k(Q)\longrightarrow L_k(Q)\otimes_S L_k(Q)$$ such that $D(\alpha)=\alpha\otimes e_{s(\alpha)}$ and $D(\alpha^*)=-e_{s(\alpha)}\otimes \alpha^*$ for each $\alpha\in Q_1$.
\end{prop}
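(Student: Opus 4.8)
The plan is to lift $D$ to an $S$-derivation on the path algebra $k\overline{Q}$ of the double quiver and then descend it through the defining quotient. First I would regard $M:=L_k(Q)\otimes_S L_k(Q)$ as a $k\overline{Q}$-bimodule by restriction of scalars along the canonical surjection $\pi\colon k\overline{Q}\to L_k(Q)$; since $\pi$ restricts to the identity on $S$, this is compatible with the $S$-bimodule structure. Applying Proposition \ref{prop:path} to the quiver $\overline{Q}$, whose arrows are $Q_1$ together with the ghost arrows $\alpha^*$, the isomorphism (\ref{equ:der-p}) shows that prescribing an $S$-bimodule map $k\overline{Q}_1\to M$ is the same as prescribing a unique $S$-derivation. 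I therefore define $\widetilde{D}\colon k\overline{Q}\to M$ by $\widetilde{D}(\alpha)=\alpha\otimes e_{s(\alpha)}$ and $\widetilde{D}(\alpha^*)=-e_{s(\alpha)}\otimes\alpha^*$ for $\alpha\in Q_1$; one checks that these elements lie in the correct homogeneous components $e_{t(\gamma)}Me_{s(\gamma)}$, so the prescription is a genuine $S$-bimodule map.

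Next I would descend $\widetilde{D}$ to $L_k(Q)=k\overline{Q}/I$, where $I$ is the ideal generated by the set $T$ of Cuntz-Krieger relations in (\ref{equ:t}). Because $M$ is pulled back along $\pi$ and $\pi(I)=0$, we have $IM=0=MI$, so Lemma \ref{lem:der2} applies once we verify $\widetilde{D}(t)=0$ for every $t\in T$. This verification is the computational core. For the first family, the Leibniz rule gives $\widetilde{D}(\alpha\beta^*)=\alpha\otimes e_{s(\alpha)}\beta^*-\alpha e_{s(\beta)}\otimes\beta^*$; using $s(\alpha)=s(\beta)$ together with $e_{s(\alpha)}\beta^*=\beta^*$ and $\alpha e_{s(\beta)}=\alpha$, both terms equal $\alpha\otimes\beta^*$ and cancel, while $\widetilde{D}(\delta_{\alpha,\beta}e_{t(\alpha)})=0$ since $\widetilde D$ kills $S$.

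For the second family I would compute, for each $\alpha$ with $s(\alpha)=i$, that $\widetilde{D}(\alpha^*\alpha)=-e_i\otimes\alpha^*\alpha+\alpha^*\alpha\otimes e_i$. Summing over the set $\{\alpha\mid s(\alpha)=i\}$, which is finite because $i$ is regular, and pulling the tensor out of the finite sum yields $-e_i\otimes(\sum_\alpha\alpha^*\alpha)+(\sum_\alpha\alpha^*\alpha)\otimes e_i$. Here the decisive point, and the only place where the Cuntz-Krieger relation itself is invoked, is that the entries $\sum_\alpha\alpha^*\alpha$ already live in $L_k(Q)$, where they equal $e_i$; hence the expression collapses to $-e_i\otimes e_i+e_i\otimes e_i=0$, and subtracting $\widetilde{D}(e_i)=0$ finishes this case. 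Lemma \ref{lem:der2} then produces the induced $S$-derivation $D$, and uniqueness is immediate from Lemma \ref{lem:der}, since $L_k(Q)$ is generated by $S$ together with the arrows and ghost arrows, on which $D$ is prescribed.

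I expect the second family to be the main obstacle, not because it is technically hard but because it is the one step where one must exploit that the target bimodule already satisfies the defining relations: the telescoping of the sum into $e_i\otimes e_i$ depends precisely on rewriting $\sum_\alpha\alpha^*\alpha$ inside the tensor factors, and it is worth being careful that each $\alpha^*\alpha$ is interpreted in $L_k(Q)$ rather than in $k\overline{Q}$. The remaining verifications are routine bookkeeping with the local units $e_i$.
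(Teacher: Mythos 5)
Your proposal is correct and follows essentially the same route as the paper: apply the isomorphism (\ref{equ:der-p}) to the double quiver $\overline{Q}$ to get the lifted derivation on $k\overline{Q}$ (with target viewed as a $k\overline{Q}$-bimodule via the quotient map), verify that the two families of Cuntz--Krieger relations are killed by exactly the computations in the paper, and descend via Lemma \ref{lem:der2}. Your additional remarks --- that $IM=0=MI$ holds because the bimodule structure is pulled back along $\pi$, and that uniqueness follows from Lemma \ref{lem:der} since $L_k(Q)$ is generated by the arrows and ghost arrows over $S$ --- are points the paper leaves implicit, and they are correctly justified.
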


\begin{proof}
Set $L=L_k(Q)$. We view $L\otimes_S L$ as a $k\bar{Q}$-bimodule. Applying (\ref{equ:der-p}) to the double quiver $\bar{Q}$, we infer that there is a unique $S$-derivation $D\colon k\bar{Q}\rightarrow L\otimes_S L$ satisfying $D(\alpha)=\alpha\otimes e_{s(\alpha)}$ and $D(\alpha^*)=-e_{s(\alpha)}\otimes \alpha^*$ for each $\alpha\in Q_1$.

Recall from (\ref{equ:t}) the generating set $T$ of the ideal. By Lemma \ref{lem:der2}, it suffices to claim that $D(t)=0$ for each $t\in T$. For $\alpha, \beta\in Q_1$ with $s(\alpha)=s(\beta)$, we have
\begin{align*}
D(\alpha \beta^*-\delta_{\alpha, \beta}e_{t(\alpha)})&= D(\alpha) \beta^*+\alpha D(\beta^*)\\
                                                     &=\alpha\otimes \beta^*-\alpha\otimes \beta^*=0.
\end{align*}
For each regular vertex $i$, we have
\begin{align*}
D(\sum_{\{\alpha\in Q_1\; |\; s(\alpha)=i\}}\alpha^*\alpha-e_i)&= \sum_{\{\alpha\in Q_1\; |\; s(\alpha)=i\}} (D(\alpha^*)\alpha+ \alpha^* D(\alpha))\\
                                                     &=\sum_{\{\alpha\in Q_1\; |\; s(\alpha)=i\}} (-e_i\otimes \alpha^*\alpha+\alpha^*\alpha\otimes e_i)\\
                                                     &=-e_i\otimes e_i+e_i\otimes e_i=0,
\end{align*}
where the third equality uses the second Cuntz-Krieger relations. This proves the claim. We are done.
\end{proof}

\begin{rem}
 Let $p=\alpha_n\cdots \alpha_2\alpha_1$ be a path of length $n$ in $Q$. Then we have  $D(p)=p\otimes e_{s(p)}+\sum_{j=1}^{n-1}\alpha_n\cdots \alpha_{j+1}\otimes \alpha_j\cdots \alpha_1$ and $D(p^*)=-e_{s(p)}\otimes p^* - \sum_{j=1}^{n-1}\alpha^*_1\cdots \alpha^*_{j}\otimes \alpha^*_{j+1}\cdots \alpha^*_n$.
\end{rem}

\section{A projective bimodule resolution}
\label{section4}
In this section, we describe an explicit projective bimodule resolution for the Leavitt path algebra of a row-finite quiver.

Let $Q$ be a \emph{row-finite} quiver, that is, $Q$ does not contain an infinite emitter. Recall that $Q_0^{\rm re}$ denotes the set of regular vertices, that is, the set of non-sinks for a row-finite quiver $Q$.  Let $L=L_k(Q)$ be the Leavitt path algebra and set $S=\bigoplus_{i\in Q_0}ke_i\subseteq L_k(Q)$.

We identify $L\otimes_S L$ with $\bigoplus_{i\in Q_0} Le_i\otimes_k e_iL$. In particular, $\bigoplus_{i\in Q^{\rm re}_0} Le_i\otimes_k e_iL$ is an $L$-subbimodule of $L\otimes_S L$. We observe that the image of the $S$-derivation $D\colon L\rightarrow L\otimes_S L$ in Proposition \ref{prop:der-L} is contained in $\bigoplus_{i\in Q^{\rm re}_0} Le_i\otimes_k e_iL$. Hence, by abuse of notation, we have the following $S$-derivation
$$D\colon L\longrightarrow \bigoplus_{i\in Q^{\rm re}_0} Le_i\otimes_k e_iL.$$

The following main result states a projective bimodule resolution of $L=L_k(Q)$. The resolution seems to be somewhat surprising, as it is quite different from the one in Proposition \ref{prop:path}.

\begin{thm}\label{thm:1}
Let $Q$ be a row-finite quiver. Then there is a projective bimodule resolution of $L$
$$0\longrightarrow \bigoplus_{i\in Q_0^{\rm re}} Le_i\otimes_k e_i L\stackrel{\partial}\longrightarrow L\otimes_S L \stackrel{m}\longrightarrow L\longrightarrow 0,$$
where $m$ is the multiplication map and the bimodule homomorphism $\partial$ is uniquely determined by $$\partial(e_i\otimes e_i)=e_i\otimes e_i-\sum_{\{\alpha\in Q_1\; |\; s(\alpha)=i \}} \alpha^*\otimes \alpha,\quad \mbox{for all }i\in Q_0^{\rm re}.$$

In particular, the $S$-derivation $D\colon L\rightarrow \bigoplus_{i\in Q_0^{\rm re}} Le_i\otimes_k e_i L$ is universal.
\end{thm}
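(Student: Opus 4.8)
\emph{The plan} is to realise the three-term complex in the statement as a retract (direct summand) of the relative bar resolution (\ref{equ:bar}) and to read off its exactness from that of the bar resolution. Write $L=L_k(Q)$ and $\Omega=\bigoplus_{i\in Q_0^{\rm re}}Le_i\otimes_k e_iL$, and let $\Delta$ be the universal $S$-derivation of (\ref{equ:uni}). First I would dispose of the easy points. Since $Le_i\otimes_k e_iL$ is the bimodule freely generated by $e_i\otimes e_i$ in the corner $e_i(-)e_i$, the map $\partial$ is a well-defined bimodule homomorphism once one checks that $\partial(e_i\otimes e_i)$ lies in $e_i(L\otimes_S L)e_i$, which is clear by inspecting the summands $e_i\otimes e_i$ and $\alpha^*\otimes\alpha$ with $s(\alpha)=i$. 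The relation $m\circ\partial=0$ is then immediate from the second Cuntz--Krieger relation $\sum_{s(\alpha)=i}\alpha^*\alpha=e_i$.

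The computational heart of the proof is the identity $\partial\circ D=\Delta$. Both sides are $S$-derivations $L\to L\otimes_S L$ (note $\operatorname{im}\partial\subseteq\ker m=\Omega^1_S(L)$, so $\partial\circ D$ lands there), so by Lemma \ref{lem:der} it is enough to compare them on the generators $\alpha,\alpha^*$. For $\alpha\in Q_1$ with $s(\alpha)=i$, $t(\alpha)=j$ one computes $\partial(D(\alpha))=\alpha\cdot\partial(e_i\otimes e_i)=\alpha\otimes e_i-\sum_{s(\beta)=i}\alpha\beta^*\otimes\beta$, and the first Cuntz--Krieger relation $\alpha\beta^*=\delta_{\alpha,\beta}e_j$ collapses this to $\alpha\otimes e_i-e_j\otimes\alpha=\Delta(\alpha)$; the case of $\alpha^*$ is symmetric.

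With this in hand I would construct the retract explicitly. Let $\psi\colon L^{\otimes_S 3}\to\Omega$ be the bimodule map $\psi(a_0\otimes a_1\otimes a_2)=a_0D(a_1)a_2$ attached to $D$ by (\ref{equ:hom-der}); it satisfies $\psi\circ d_2=0$, and $\partial\circ D=\Delta$ gives $\partial\circ\psi=d_1$ (using (\ref{equ:uni}) and the tensor relations over $S$), so $\psi$, together with the identity maps on $L\otimes_S L$ and on $L$, is a chain map from the bar resolution to our complex. Conversely, define $\phi\colon\Omega\to L^{\otimes_S 3}$ by $\phi(e_i\otimes e_i)=\sum_{s(\alpha)=i}\alpha^*\otimes\alpha\otimes e_i$ (again a legitimate bimodule map, as this element lies in the right corner); applying $d_1$ and the second Cuntz--Krieger relation gives $d_1\circ\phi=\partial$, so $\phi$, together with the same identity maps, is a chain map in the opposite direction. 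The decisive step is $\psi\circ\phi=\operatorname{id}_\Omega$: on the generator $e_i\otimes e_i$ one finds $\sum_{s(\alpha)=i}\alpha^*D(\alpha)=\sum_{s(\alpha)=i}\alpha^*\alpha\otimes e_i=e_i\otimes e_i$, once more by the second Cuntz--Krieger relation, and bimodule-linearity finishes it. Thus our complex is a direct summand of the exact bar resolution, hence exact; as $L\otimes_S L$ and each $Le_i\otimes_k e_iL$ are projective bimodules, this is the sought projective resolution.

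The final assertion then follows formally: exactness makes $\partial$ a bimodule isomorphism $\Omega\xrightarrow{\sim}\ker m=\Omega^1_S(L)$, and since $(\partial\circ D)(a)=\Delta(a)=a\otimes e_i-e_j\otimes a$ for $a\in e_jLe_i$, Lemma \ref{lem:uni} (with $\theta=\partial$) shows that $D$ is universal. The main obstacle I anticipate is guessing the contracting map $\phi$ and verifying $\psi\circ\phi=\operatorname{id}_\Omega$; the whole construction pivots on the second Cuntz--Krieger relation, which is precisely why the regular vertices $Q_0^{\rm re}$ form the natural index set for $\Omega$.
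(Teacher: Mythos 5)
Your proposal is correct and follows essentially the same route as the paper: your maps $\psi$ and $\phi$ are exactly the paper's $\pi$ and $\iota$, the key identity $\partial\circ D=\Delta$ is verified on generators via Lemma \ref{lem:der} just as in the paper, and the retraction $\psi\circ\phi=\operatorname{id}$ exhibits the complex as a direct summand of the relative bar resolution, with universality of $D$ deduced from Lemma \ref{lem:uni}. The only cosmetic differences are that you spell out a few points the paper leaves implicit (well-definedness of $\partial$, and that exactness makes $\partial$ an isomorphism onto $\Omega^1_S(L)$ before invoking Lemma \ref{lem:uni}).
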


\begin{proof}
In the following diagram,  the middle row is the relative bar resolution of $L$ given in (\ref{equ:bar}), and the bimodule homomorphisms $\iota$ and $\pi$ are given by $\iota(e_i\otimes e_i)=\sum_{{\{\alpha\in Q_1\; |\; s(\alpha)=i \}}} \alpha^*\otimes \alpha \otimes e_i$ and $\pi(a_0\otimes a_1\otimes a_2)=a_0D(a_1)a_2$, respectively.

\[\xymatrix{
& 0\ar[d]\ar[r] & \bigoplus_{i\in Q_0^{\rm re}} Le_i\otimes_k e_i L \ar[d]^-{\iota}\ar[r]^-{\partial} & L\otimes_S L \ar@{=}[d] \ar[r]^-{m} & L \ar@{=}[d]   \ar[r] & 0\\
\cdots \ar[r]& L\otimes_S L \otimes_S L \otimes_S L \ar[d] \ar[r]^-{d_2} & L\otimes_S L \otimes_S L \ar[d]^-{\pi}\ar[r]^-{d_1} & L\otimes_S L  \ar@{=}[d] \ar[r]^-{m} & L \ar@{=}[d]  \ar[r] & 0\\
& 0\ar[r] & \bigoplus_{i\in Q_0^{\rm re}} Le_i\otimes_k e_i L \ar[r]^-{\partial} & L\otimes_S L \ar[r]^-{m} & L \ar[r] & 0
}\]

We claim that the above diagram commutes. Indeed, we observe that $\partial=d_1\circ \iota$, since they take the same values on the generators $e_i\otimes e_i$. We have $\pi\circ d_2=0$, since $D$ is an $S$-derivation. To see that $\partial\circ \pi=d_1$, we use the isomorphism (\ref{equ:hom-der}). Hence, it suffices to show that the two derivations $\partial\circ D$ and $\Delta$ coincide, where $\Delta: L\xrightarrow{} L\otimes_S L$ is defined by (\ref{equ:uni}). We apply Lemma \ref{lem:der}. Then it suffices to verify that $\partial \circ D(x)=\Delta(x)$ for each $x\in \{e_i, \beta, \beta^*\; |\; i\in Q_0, \beta\in Q_1\}$. But this is trivial, using the definition of $\partial$ and the Cuntz-Krieger relations $\beta\alpha^*=\delta_{\alpha, \beta}e_{t(\alpha)}=\alpha\beta^*$. This proves the claim.

We observe that $\pi\circ \iota={\rm Id}$. It follows that as a complex,  the upper row in the above diagram is a direct summand of the middle row. Since the relative bar resolution is exact, we have the required projective bimodule resolution. By Lemma~\ref{lem:uni}, the last statement follows from $\partial\circ D=\Delta$.
\end{proof}

\begin{rem}
(1) Let $M$ be a left $L$-module. Then tensering over $L$ with the projective bimodule resolution in Theorem \ref{thm:1}, we obtain a projective resolution of $M$. In particular, the projective dimension of $M$ is at most one, that is, the Leavitt path algebra $L$ is hereditary; see also \cite[Theorem 3.5]{AMP}. The obtained projective resolution of $M$ is usually not minimal; compare \cite{AMT}.

(2) The above projective bimodule resolution will be useful to study the Hochschild cohomological groups ${\rm HH}^0(L)$ and ${\rm HH}^1(L)$ of $L$. Recall that ${\rm HH}^0(L)$ is isomorphic to the center of $L$, and that ${\rm HH}^1(L)$ is related to ${\rm Der}_S(L, L)$. Indeed, the universal $S$-derivation yields a neat description of  ${\rm Der}_S(L, M)$ for any $L$-bimodule $M$:
$${\rm Der}_S(L, M)\simeq {\rm Hom}_{L\mbox{-}L}(\bigoplus_{i\in Q_0^{\rm re}} Le_i\otimes_k e_i L, M)\simeq \bigoplus_{i\in Q_0^{\rm re}} e_iMe_i.$$
We mention the related results in \cite{AC, CMMS} and \cite{Lop}.  We observe that if $Q$ is a finite quiver, then the  Hochschild cohomological groups of $L$, viewed as a differential graded algebra with trivial differential,  is related to the singular Hochschild cohomology groups \cite{Wang} of the corresponding algebra of radical square zero. This will be treated elsewhere.
\end{rem}

 Recall that a quiver $Q$ is \emph{row-countable} \cite{AR} provided that there are at most countable arrows starting at any vertex. For example, any countable quiver is row-countable.  Applying the desingularization in \cite{AA08}, we infer the following result, as stated in Introduction.

\begin{thm}
Let $Q$ be a row-countable quiver. Then the Leavitt path algebra $L_k(Q)$ has Hochschild cohomological dimension at most one, that is, it is quasi-free in the sense of \cite{CQ}.
\end{thm}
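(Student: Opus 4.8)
The plan is to reduce the row-countable case to the row-finite case of Theorem~\ref{thm:1} by means of desingularization, and then to transport the bound on the Hochschild cohomological dimension along a Morita equivalence. The only obstruction to row-finiteness of a row-countable quiver $Q$ is the presence of infinite emitters, each of which emits only countably many arrows by hypothesis; this is precisely the situation handled by the desingularization procedure, so nothing further is needed once that input is in place.

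First I would apply the desingularization of \cite{AA08} to $Q$. For each infinite emitter one attaches an infinite tail and redistributes the countably many outgoing arrows along it, so that every vertex of the resulting quiver $Q'$ emits only finitely many arrows. Thus $Q'$ is row-finite, and the desingularization comes equipped with a comparison between $L_k(Q)$ and $L_k(Q')$: the former is Morita equivalent to the latter, realized concretely as a full corner $eL_k(Q')e$ with $L_k(Q')\, e\, L_k(Q') = L_k(Q')$. Next, by Theorem~\ref{thm:1} the row-finite quiver $Q'$ yields a length-one projective $L_k(Q')$-bimodule resolution, so $L_k(Q')$ has Hochschild cohomological dimension at most one, that is, it is quasi-free in the sense of \cite{CQ}.

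It then remains to argue that Hochschild cohomology, and hence quasi-freeness, is invariant under the Morita equivalence supplied above, so that the bound passes from $L_k(Q')$ to $L_k(Q)$. The main obstacle I anticipate is the careful handling of Morita invariance in the non-unital setting: $L_k(Q)$ and $L_k(Q')$ are algebras with enough idempotents rather than unital algebras, so one must either invoke a version of Morita theory for idempotent algebras together with the corresponding identification of Hochschild cohomology groups, or argue directly that quasi-freeness transfers from an algebra to a full corner. Concretely, verifying that the comparison furnished by \cite{AA08} is a genuine Morita equivalence of algebras with local units, and that the enveloping algebras are correspondingly equivalent so that the relevant $\mathrm{Ext}$-dimension is preserved, is where the real work lies; once this is established the conclusion is immediate.
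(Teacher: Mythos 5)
Your proposal is correct and follows essentially the same route as the paper: desingularize $Q$ into a row-finite quiver $Q'$ via \cite{AA08}, apply Theorem \ref{thm:1} to bound the Hochschild cohomological dimension of $L_k(Q')$ by one, and transfer the bound along the Morita equivalence. The subtlety you flag — Morita invariance of Hochschild cohomological dimension for non-unital algebras with local units — is exactly the point the paper also addresses, disposing of it by observing that the argument of \cite[\S 1.5.6]{Lod} carries over to algebras with local units.
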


\begin{proof}
By \cite[Theorem 5.6]{AA08}, there exists a row-finite quiver $Q'$ such that the two Leavitt path algebras $L_k(Q)$ and $L_k(Q')$ are Morita equivalent. It is well known that Morita equivalent algebras have the same Hochschild cohomological dimension; here, we observe that the argument in \cite[\S 1.5.6]{Lod} applies to algebras with local units. Then the result follows immediately from Theorem \ref{thm:1} for $Q'$.
\end{proof}

\begin{rem}
We expect that for an arbitrary quiver $Q$, the Leavitt path algebra $L_k(Q)$ is still quasi-free. But the argument in this paper will not work, since the desingularization only exists for row-countable quivers; see \cite{AR}. We observe that $L_k(Q)$ might be realized as the direct limit of subalgebras, which are the Leavitt path algebras of certain row-countable sub quivers of $Q$; compare \cite[Proposition 2.7]{Goo}. By \cite[Theorem 2.3]{Oso},  one obtains  an upper bound on the Hochschild cohomological dimension  of $L_k(Q)$, which depends on the cardinality ${\rm sup}_{i\in Q_0} |s^{-1}(i)|$.
\end{rem}

\vskip 10pt

\noindent {\bf Acknowledgements.}\quad Chen is supported by the National Natural Science Foundation of China (Nos. 11522113 and  11671245), and Li acknowledges the support of Australian Research Council grant DP160101481. Li thanks Xiao-Wu Chen for his hospitality when she visited the University of Science and Technology of China,and  thanks Roozbeh Hazrat for inspiring  discussions.  We thank Gene Abrams for the reference \cite{AR} and helpful comments.

 {\footnotesize \noindent Xiao-Wu Chen\\
 Key Laboratory of Wu Wen-Tsun Mathematics, Chinese Academy of Sciences,\\
 School of Mathematical Sciences, University of Science and Technology of China,\\
  Hefei 230026, Anhui, PR China \\
  xwchen$\symbol{64}$mail.ustc.edu.cn}

\vskip 5pt

{\footnotesize \noindent Huanhuan Li\\ Centre for Research in Mathematics, Western Sydney University, Australia\\
h.li$\symbol{64}$westernsydney.edu.au}

\vskip 5pt

{\footnotesize \noindent Zhengfang Wang\\ Beijing International Center for Mathematical Research, Peking University, \\ No. 5 Yiheyuan Road Haidian District, Beijing 100871, PR China \\
 wangzhengfang$\symbol{64}$bicmr.pku.edu.cn}

\end{document}